\def\NAT@def@citea{\def\@citea{\NAT@separator}}
\theoremstyle{plain}
\newtheorem{theorem}{Theorem}[section]
\newtheorem{lemma}[theorem]{Lemma}
\theoremstyle{definition}
\newtheorem{example}[theorem]{Example}
\theoremstyle{remark}
\newtheorem{remark}{Remark}
\begin{document}


\title{Schur decomposition of several matrices}

\author{
\name{Andrii Dmytryshyn\textsuperscript{a}\thanks{CONTACT Andrii Dmytryshyn. Email: andrii.dmytryshyn@oru.se. Address: School of Science and Technology, 
\"{O}rebro University, 
\"{O}rebro, SE-70182, 
Sweden.}}}

\maketitle

\begin{abstract}
Schur decompositions and the corresponding Schur forms of a single matrix, a pair of matrices, or a collection of matrices associated with the periodic eigenvalue problem are frequently used and studied. These forms are upper-triangular complex matrices or quasi-upper-triangular real
matrices that are equivalent to the original matrices via unitary or, respectively, orthogonal transformations.
In general, for theoretical and numerical purposes we often need to reduce, by admissible transformations, a collection of matrices to the Schur form. Unfortunately, such a reduction is not always possible.  
In this paper we describe all collections of complex (real) matrices that can be reduced to the Schur form by the corresponding unitary (orthogonal) transformations and explain how such a reduction can be done. 
We prove that this class consists of the collections of matrices associated with pseudoforest graphs. 
{ In other words,} we describe when the Schur form of a collection of matrices exists and how to find it.  
\end{abstract}

\begin{keywords}
Schur decomposition;  Schur form;  upper-triangular matrix;  quasi-upper-triangular matrix;  quiver;  graph 

\end{keywords}

\section{Introduction} \label{intro}
%
%
The {\it Schur decomposition} \cite{Schu09} of a complex $n \times n$ matrix $A$ is the matrix decomposition $A = U T U^H$, where $T$ is an upper triangular matrix, $U$ is a unitary matrix, and $U^{H}$ denotes the conjugate transpose of $U$. If the matrix $A$ has real entries, then an analogous decomposition is $A = Q T Q^\top$, where $Q$ is an orthogonal matrix, $Q^\top$ denotes the transpose of $Q$, and $T$ is a quasi-upper-triangular matrix with $1 \times 1$ and $2 \times 2$ blocks on the main diagonal. The latter is also called the {\it Schur decomposition of a real matrix} $A$. The matrix $T$ (in both the real and complex cases) is called the {\it Schur form} of $A$. Schur decompositions became classical results in matrix analysis, see, e.g., \cite{GoVa96,HoJo85}. They are well studied and understood, in particular, due to their importance in applications. For the same reason, these results are extended to matrix pencils \cite{GoVa96}, 
as follows: 
$A_1 + \lambda A_2 = Q_1 T_1 Q^\top_2 + \lambda Q_1 T_2 Q^\top_2$, and to collections of matrices associated with the periodic eigenvalue problem  \cite{BoGV92, ByRh95, GrKK07b, SrDo93}: $(A_1,A_2, \dots , A_r) = (Q_2 T_1 Q^\top_1, Q_3 T_2 Q^\top_2, \dots , Q_1 T_r Q^\top_r)$. The latter decompositions are often called {\it generalized} and {\it periodic Schur decompositions}, respectively. From the applied point of view, it is often more convenient to see these decompositions as reductions of a matrix or a collection of matrices to (quasi-)upper-triangular forms, e.g., $Q^\top AQ = T$, see also Example  \ref{ex1}. 
In this paper, we describe which collections of matrices can be reduced to the Schur form,  i.e., to the (quasi-)upper-triangular form, by the corresponding transformations. In the other words, we describe when the Schur decomposition of a collection of matrices exists. 
The proof of the main result also provides a way to find the corresponding Schur form. 

To be able to describe any collection of matrices and the corresponding transformations easily, we associate them with a directed graph (this idea is borrowed from { representation theory, note also that directed graph are often called quivers in representation theory literature}). 
Recall that a {\it pseudotree} is an undirected connected graph that contains at most one cycle. A connected acyclic graph, i.e., a {\it tree}, is therefore a pseudotree. 
A {\it pseudoforest} is an undirected graph in which every connected component is a pseudotree. Note that a {\it forest}, i.e., a not-necessarily-connected acyclic graph, is a pseudoforest. 
%
%
We apply the terms ``tree'', ``pseudotree'' and ``pseudoforest'' to directed graphs, simply by ignoring the directions of the edges. 
%
A \emph{representation} of a directed graph is given by assigning to each vertex a vector space (over the field of either real or complex numbers) and to each direc ted edge a linear mapping. By choosing bases in these vector spaces, we can express the mappings { between these spaces by matrices.} Reselection of the bases reduces by corresponding equivalence  transformations the matrices of these linear mappings, 
for details see \cite{HoSe14,Serg88} as well as the following example. 

\begin{example} \label{ex1}
For the graphs in the table in Figure \ref{tabl}: ${\cal V}, {\cal V}_1, {\cal V}_2, \dots ,{\cal V}_n$ are vector spaces over the field of real numbers, $A, A_1,A_2,\dots ,A_n$ are matrices of the linear mappings, and $Q, Q_1, Q_2,\dots ,Q_n$ are orthogonal matrices that are used to change bases in the vector spaces ${\cal V}, {\cal V}_1,{\cal V}_2,\dots ,{\cal V}_n$, respectively. How the matrices $A, A_1,A_2, \dots ,A_n$ are transformed when we change the bases of ${\cal V}, {\cal V}_1,{\cal V}_2,\dots ,{\cal V}_n$ with matrices $Q, Q_1,Q_2,\dots ,Q_n$ is presented in the third column. 
\begin{figure}
\begin{table}[H]
\centering
\begin{tabular}[t]{ c c c  c }
&&&\\ \hline
&Graph & Representation of the graph& Transformation \\ \hline
(a)&\begin{tikzpicture}[->,shorten >=1pt,auto,node distance=3cm,
        thick,main 
        node/.style={circle,fill=black!100,draw,minimum size=0.2cm,inner sep=0pt]}]

    \node[main node] (1) {};
    \path[->]
    (1) edge [loop above, distance=1cm] node {} (1);
\end{tikzpicture} &
\begin{tikzpicture}[->,shorten >=1pt,auto,node distance=3cm,
        thick,main 
        node/.style={circle,draw,minimum size=0.7cm,inner sep=0pt]}]

    \node[main node] (1) {${\cal V}$};
    \path[->]
    (1) edge [loop above] node {$ A $} (1);
\end{tikzpicture} & 
$A \mapsto Q^\top A Q=:T$ \\ 
&&&\\ 
\hline
(b)&\begin{tikzpicture}[->,shorten >=1pt,auto,node distance=2cm,
        thick,main 
        node/.style={circle,fill=black!100,draw,minimum size=0.2cm,inner sep=0pt]}]

    \node[main node] (1) {};
    \node[main node] (2) [right of=1] {};

    \path[->]
    (1) edge [above] node 
{} 
(2);
\end{tikzpicture}&
\begin{tikzpicture}[->,shorten >=1pt,auto,node distance=2cm,
        thick,main 
        node/.style={circle,draw,minimum size=0.7cm,inner sep=0pt]}]

    \node[main node] (1) {${\cal V}_1$};
    \node[main node] (2) [right of=1] {${\cal V}_2$};

    \path[->]
    (1) edge [above] node 
{$A$} 
(2);
\end{tikzpicture}& 
$A \mapsto Q^\top_2 A Q_1=:T$
\\
&&&\\ 
\hline 
$\begin{matrix}
{\rm(c)}\\
\quad \\
\quad
\end{matrix}$
&\begin{tikzpicture}[->,shorten >=1pt,auto,node distance=2cm,
        thick,main 
        node/.style={circle,fill=black!100,draw,minimum size=0.2cm,inner sep=0pt]}]

    \node[main node] (1) {};
    \node[main node] (2) [right of=1] {};

    \path[->]
    (1) edge [bend left] node 
{} 
(2)
    (1) edge [bend right] node [below]
{} 
(2);
\end{tikzpicture} &
\begin{tikzpicture}[->,shorten >=1pt,auto,node distance=2cm,
        thick,main 
        node/.style={circle,draw,minimum size=0.7cm,inner sep=0pt]}]

    \node[main node] (1) {${\cal V}_1$};
    \node[main node] (2) [right of=1] 
    {
    ${\cal V}_2$ 
    };

    \path[->]
    (1) edge [bend left] node 
{$A_1$} 
(2)
    (1) edge [bend right] node [below]
{$A_2$} 
(2);
\end{tikzpicture}
& 
$
\begin{matrix}
A_1 \mapsto Q^\top_2 A_1 Q_1=:T_1\\
A_2 \mapsto Q^\top_2 A_2 Q_1=:T_2\\
\quad \\
\quad
\end{matrix}
$\\
\hline 
$\begin{matrix}
{\rm(d)}\\
\quad \\
\quad
\end{matrix}$
&\begin{tikzpicture}[->,shorten >=1pt,auto,node distance=2cm,
        thick,main 
        node/.style={circle,fill=black!100,draw,minimum size=0.2cm,inner sep=0pt]}]

    \node[main node] (1) {};
    \node[main node] (2) [right of=1] {};

    \path[->]
    (1) edge [bend left] node 
{} 
(2)
    (2) edge [bend left] node [below]
{} 
(1);
\end{tikzpicture} &
\begin{tikzpicture}[->,shorten >=1pt,auto,node distance=2cm,
        thick,main 
        node/.style={circle,draw,minimum size=0.7cm,inner sep=0pt]}]

    \node[main node] (1) {${\cal V}_1$};
    \node[main node] (2) [right of=1] 
    {
    ${\cal V}_2$ 
    };

    \path[->]
    (1) edge [bend left] node 
{$A_1$} 
(2)
    (2) edge [bend left] node [below]
{$A_2$} 
(1);
\end{tikzpicture}
& 
$
\begin{matrix}
A_1 \mapsto Q^\top_2 A_1 Q_1=:T_1\\
A_2 \mapsto Q^\top_1 A_2 Q_2=:T_2\\
\quad \\
\quad
\end{matrix}
$\\
\hline 
(e)&\begin{tikzpicture}[->,shorten >=1pt,auto,node distance=1cm,
        thick,main 
        node/.style={circle,fill=black!100,draw,minimum size=0.2cm,inner sep=0pt]}]
    \node[main node] (1)[label=right:$ \ \dots$] {};
    \node[main node] (3) [right of=1]  {};
    \node[main node] (2)  [left of=1] {};
    \node[main node] (5) [right of=3]  {};
\path[->]
(2) edge  node [] {} (1)
 (3) edge  node [] {} (5)
  (5) edge [bend right] node [] {} (2);
\end{tikzpicture} &
\tiny{
\begin{tikzpicture}[->,shorten >=1pt,auto,node distance=1.3cm,
        thick,main 
        node/.style={circle,draw,minimum size=0.58cm,inner sep=0pt]}]
    \node[main node] (1)[label=right:$\ \boldsymbol \dots$] {${\cal V}_2$};
    \node[main node] (2)  [left of=1] {${\cal V}_1$};
    \node[main node] (3) [right of=1]  {${\cal V}_{n-1}$};
    \node[main node] (5) [right of=3]  {${\cal V}_n$};
\path[->]
(2) edge  node [label=below: $A_1$] {} (1)
 (3) edge  node [label=below: $A_{n-1}$] {} (5)
  (5) edge [bend right] node [label=above: $A_n$] {} (2);
\end{tikzpicture}}
& 
$
\begin{matrix}
A_i \mapsto Q^\top_{i+1} A_i Q_i=:T_i,\\
i=1..n, \ Q_{n+1}:=Q_1\\
\quad 
\end{matrix}
$
\\
  \end{tabular}
  \label{table}
\end{table}
    \caption{{ The row (a) of this table shows how the problem of finding the Schur form of a single matrix can be associated with a graph with one vertex and one loop (a loop is a cycle of length one, i.e., with only one edge). The row (b) shows how the problem of reduction to an upper-triangular form of a single matrix using different orthogonal transformations from the left and right can be associated with a graph with two vertices and one edge. The row (c) presents how the problem of finding the Schur form of a pair of matrices (or equivalently the problem of finding the Schur form of a matrix pencil) can be associated with a graph with two vertices and two edges in-between them (so called Kronecker quiver). { The row (d) presents} how the problem of finding the Schur form of a pair of matrices, using different admissible transformations than in (c), can be associated with a graph with two vertices and two oppositely directed edges in-between these vertices (so called contragredient quiver). Finally, the row (e) presents how the problem of finding the periodic Schur form can be associated with a cyclic graph. Note also that all the graphs in  the table are pseudotrees and the graph in the row (b) is a tree (for examples of graphs that are not pseudotrees,  see, e.g., Figures \ref{any2c} and \ref{polpic}).}} 
    \label{tabl}
\end{figure}
\end{example}
In this paper, we show that a collection of real matrices of linear mappings associated with a pseudoforest can be reduced to a quasi-upper-triangular form by the corresponding orthogonal transformations, as well as that a collection of complex matrices of linear mappings associated with a pseudoforest can be reduced to upper-triangular forms by the corresponding unitary transformations. 
%
%
%
We also show that pseudoforests are the ``most complicated'' graphs whose representations are reduced to the Schur from. 
The latter means that for the graphs with two cycles a simultaneous \mbox{(quasi-)}upper-triangularization of the associated matrices is not possible in general.  




{ Note that, in practice, we} are typically dealing with particular cases of the result of this paper. Thus we are given a collection of matrices and their admissible transformations. Nevertheless, this information allows us to determine the associated { directed graph} immediately and without requiring any additional input data, { see, e.g., how the graph is constructed in Figure \ref{polpic}.}  
%
%
\section{Schur form or (quasi-)upper-triangularization of collections of matrices}

In this section, we present our main results in both real and complex cases: for a given collection of matrices we determine when the Schur decomposition exists 
and how to find it ({ the} procedure is { provided in} the proof of Theorem \ref{real}). 

\begin{theorem} \label{real}
Let $A_i, i = 1, \dots , n,$ be $r_{j(i)} \times r_{k(i)}$ real matrices of linear mappings associated with a pseudoforest. { Then there} exist orthogonal matrices $Q_1, \dots , Q_m,$ of compatible sizes such that    
\begin{equation*} 
\begin{aligned}
Q_{j(1)}^\top A_1Q_{k(1)}=T_1,\ \dots \ , \ Q_{j(n)}^\top A_nQ_{k(n)}=T_n, 
\end{aligned}
\end{equation*}
where $j(\cdot),k(\cdot): \{1, \dots ,n\} \to \{1, \dots ,m\}$ are mappings defined by the pseudoforest, $T_i, i = 2, \dots , n$ are $r_{j(i)} \times r_{k(i)}$ upper-triangular matrices, and $T_1$ is $r_{j(1)} \times r_{k(1)}$ quasi-upper-triangular with $1 \times 1$ and $2 \times 2$ blocks on the diagonal. 
\end{theorem}

\begin{remark}
\label{rem1}
\noindent { In the following we explain what being ``quasi-upper-triangular'' means for rectangular matrices. In the notation of Theorem \ref{real},} for each pseudotree: If $r_{j(i)} \neq r_{k(i)}$ then the matrices $T_i$ associated with the cycle are of the forms a) and b) in Figure~\ref{shapes} for the matrices associated with the arrows pointing at one direction (we can choose the direction with the largest number of arrows pointing at) and of the forms c) and d) in Figure \ref{shapes} for the matrices associated with the arrows pointing at the opposite direction. 
The remaining matrices $T_i$, associated with the trees, are of the forms a) and b) in Figure \ref{shapes} if the transformation matrix with the larger index, i.e., the one associated with the vertex further away from the cycle, (either $Q_{j(i)}$ or $Q_{k(i)}$), also has larger dimension and of the forms c) and d) in Figure~\ref{shapes} otherwise.    
\begin{figure}[]
    \centering
\begin{tikzpicture}
\draw (3,0) -- (3,2) -- (4,1) -- (4,0) -- (3,0);
\draw [fill=black] (3,2) -- (4,2) -- (4,1) -- (3,2);
\node [left, black] at (2.85,0.25) {a)};
\end{tikzpicture}
\qquad
\begin{tikzpicture}
\draw (3,0) -- (3,1) -- (4,1) -- (5,0) -- (3,0);
\draw [fill=black] (4,1) -- (5,1) -- (5,0) -- (4,1);
\node [left, black] at (2.85,0.25) {b)};
\end{tikzpicture}
\qquad
\begin{tikzpicture}
\draw (3,0) -- (3,1) -- (4,0) -- (3,0);
\draw [fill=black] (3,1) -- (3,2) -- (4,2) -- (4,0) -- (3,1);
\node [left, black] at (2.85,0.25) {c)};
\end{tikzpicture}
\qquad
\begin{tikzpicture}
\draw (3,0) -- (3,1) -- (4,0) -- (3,0);
\draw [fill=black] (3,1) -- (5,1) -- (5,0) -- (4,0) -- (3,1);
\node [left, black] at (2.85,0.25) {d)};
\end{tikzpicture}
    \caption{Rectangles represent matrices and the white parts of the rectangles represent zero entries of the matrices. Matrices represented by a) and c) have fewer columns than rows and by b) and d) have fewer rows than columns.}  
    \label{shapes}
\end{figure}
\end{remark}

\begin{proof}[Proof of Theorem \ref{real}]
Since every pseudotree is reduced independently, it is enough to prove the theorem for a pseudotree.   

Denote by $n'$ the length of the cycle. Note that $n'=1$ if a pseudotree has a loop, and $n'=2$ if a pseudotree contains the Kronecker or contragredient quiver, see, e.g., Example \ref{ex1}. 
By \cite{BoGV92, ByRh95, GrKK07b, Serg04, SrDo93} (see also Remark \ref{cancycle}) we can reduce the $n'$ matrices that form the cycle to the quasi-upper-triangular form: 
\begin{equation*} 
\begin{aligned}
Q_{j(1)}^\top A_1Q_{k(1)}=T_1,\ \dots \ , \ Q_{j(n')}^\top A_{n'}Q_{k(n')}=T_{n'}.  
\end{aligned}
\end{equation*}
The $n'$ transformation matrices $Q_{j(1)}, \dots , Q_{j(n')}, Q_{k(1)}, \dots , Q_{k(n')}$ (each written twice here) are now fixed. We use $Q_{j(1)}, Q_{k(1)}, \dots$ since we do not fix any directions of the edges; e.g., the matrix $A_1$ can be changed either as $Q_2^\top A_1 Q_1$, or as $Q_1^\top A_1Q_2$ depending on the direction of the edge. If $n>n'$ then there is at least one vertex { (among the $n'$ vertices of the cycle)} of degree at least 3. Assume that it is vertex $1$ (here we consider the general case but one can see Figure \ref{ptree1} as an example). 

\begin{figure}[H]
    \centering
\begin{tikzpicture}[->,shorten >=1pt,auto,node distance=3cm,
        thick,main 
        node/.style={circle,draw,minimum size=0.88cm,inner sep=0pt]}]

\node[main node] (1) {${\cal V}_1$};
\node[main node] (2) [below left of=1] {${\cal V}_2$};
\node[] (3) [above left of=2] {$\dots$};
\node[main node] (4) [above left of=1] {${\cal V}_{n'}$};
\node[main node] (5) [above right of=1] {${\cal V}_{n'+1}$};
\node[main node] (6) [right of=5] {${\cal V}_{n'+2}$};
\node[main node] (7) [below right of=5] {${\cal V}_{n'+3}$};
\node[main node] (8) [below right of=1] {${\cal V}_{n'+4}$};

\path[->]
      (1) edge [bend right] node [right] [label=above: $A_{n'}$] {} (4)
           edge [bend left] node [right] {} (2)
           edge [] node [right] [label=below: $A_{n'+1}$] {} (5)
      (2) edge [bend right,-] node [right] [label=below: $A_1$] {} (1)
          edge [bend left,-] node [right] {} (3)  
      (4) edge [bend right,-] node [right] [label=below: $A_{n'-1}$] {} (3)
          edge [bend left,-] node [right] {} (1)
      (3) edge [bend right] node [right] [label=above: $A_2$] {} (2)
          edge [bend left] node [right] {} (4)
      (5) edge [] node [right] [label=above: $A_{n'+2}$] {} (6) 
      (7) edge [] node [right] [label=above: $A_{n'+3}$] {} (5)
      (8) edge [] node [right] [label=above: $A_{n'+4}$] {} (1);       
\end{tikzpicture}
    \caption{A part of a pseudotree with a cycle of length $n'$. Two tree graphs are connected to the cycle at vertex 1: The first one has 3 vertices, indexed $n'+1,n'+2,n'+3$, and the second one has 1 vertex, indexed $n'+4$.} 
    \label{ptree1}
\end{figure}

Now consider the edge that is not involved in the cycle but starts a tree connected to the cycle, let it correspond to $A_{n'+1}$. Depending on the direction of this edge we have the following two cases:  
\begin{itemize}
\item The edge is directed ``from the cycle'' (as in Figure \ref{ptree1}); then $A_{n'+1}$ is changed as $Q_{n'+1}^\top A_{n'+1}Q_1$, where $Q_1$ is fixed. In this case, we choose $Q_{n'+1}$ to be equal to ``$Q$'' from the $QR$ decomposition of $A_{n'+1}Q_1$,  i.e., $A_{n'+1}Q_1= Q_{n'+1}R_{n'+1}$ and thus $Q_{n'+1}^\top A_{n'+1}Q_1= R_{n'+1}$. Clearly, $T_{n'+1}:= R_{n'+1}$.

\item The edge is directed ``to the cycle''; then $A_{n'+1}$ is changed as $Q_1^\top A_{n'+1}Q_{n'+1}$, where $Q_1$ is fixed.  In this case, we choose $Q_{n'+1}$ to be equal to ``$Q$'' from the $RQ^\top$ decomposition of $Q_1^\top A_{n'+1}$,  i.e., $Q_1^\top A_{n'+1}= R_{n'+1}Q^\top_{n'+1}$ and thus $Q_1^\top A_{n'+1}Q_{n'+1}= R_{n'+1}$. Again, $T_{n'+1}:= R_{n'+1}$.
\end{itemize}
Both of the above cases use $Q_{n'+1}$ for the reduction. This results { in} fixing $Q_{n'+1}$. Note that if the dimension of ${\cal V}_{n'+1}$ is smaller than the dimension of ${\cal V}_{1}$, then $A_{n'+1}$ is reduced to one of the forms c) or d) in Figure~\ref{shapes} (sometimes called upper-trapezoidal); if the dimension of ${\cal V}_{n'+1}$ is larger than the dimension of ${\cal V}_{1}$, then $A_{n'+1}$ is reduced to one of the forms a) or b) in Figure~\ref{shapes}. 

Now, this procedure must be done for all the remaining edges sharing the vertex $n'+1$, if there are any. We repeat this procedure until we reach the end of the tree (leaves). 

If the degree of vertex 1 is greater than three, then there { is} more than one tree connected to the cycle at this vertex, e.g., in Figure \ref{ptree1} the degree of vertex 1 is four. We repeat the reduction above for each such a tree, then move to the next vertex of the cycle and reduce all the trees connected to the cycle there, etc.   
\end{proof}


{ We say that a graph has two cycles if  there is an edge in the first cycle that does not belong to the second cycle and there is an edge in the second cycle that does not belong to  the  first cycle.} What remains to show is that if { any of the connected components of a} graph has two cycles then a simultaneous quasi-upper-triangularization of the corresponding matrices is not possible, in general. In Lemma \ref{pair} we show that collections of matrices associated with the simplest graph containing two cycles, i.e., the graph with only one vertex and two loops, can not be quasi-upper-triangularized. { In Theorem \ref{l2c} we show that the example given in Lemma \ref{pair} is generalizable to any connected graph with two cycles. }
\begin{lemma}\label{pair}
Let $A$ be a non-quasi-diagonalizable real matrix, 
( i.e., it cannot be reduced to a diagonal form with $1 \times 1$ and $2 \times 2$ blocks on the diagonal). { Then there} is no orthogonal matrix $Q$ such that $Q^\top A Q$ and $Q^\top A^\top Q$ are both quasi-upper-triangular.    
\end{lemma}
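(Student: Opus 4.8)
\medskip
\noindent\textbf{Proof strategy.} Since both the hypothesis and the conclusion are unchanged by replacing $A$ with $Q^\top A Q$ for an orthogonal $Q$, I would argue by contraposition: assume some orthogonal $Q$ makes $M:=Q^\top A Q$ and $M^\top=Q^\top A^\top Q$ both quasi-upper-triangular, and prove that $M$ — hence $A=QMQ^\top$ — is quasi-diagonalizable (i.e. similar, by a not-necessarily-orthogonal change of basis, to a block-diagonal matrix with $1\times 1$ and $2\times 2$ real blocks). The first observation is structural: ``$M$ quasi-upper-triangular'' permits a nonzero entry strictly below the diagonal only in a position $(j{+}1,j)$ where $\{j,j{+}1\}$ is one of the $2\times 2$ diagonal blocks, and ``$M^\top$ quasi-upper-triangular'' (equivalently, $M$ quasi-lower-triangular) permits a nonzero entry strictly above the diagonal only in a position $(j,j{+}1)$ where $\{j,j{+}1\}$ is a $2\times 2$ block of the second partition. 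Hence $M$ is tridiagonal, and the sets $L=\{j: M_{j+1,j}\neq 0\}$ and $U=\{j: M_{j,j+1}\neq 0\}$ each contain no two consecutive integers, since two consecutive indices cannot both begin a $2\times 2$ block of one partition.

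Next I would note that $M$ is block-diagonal along the connected components of the subgraph of the path $1-2-\cdots-n$ in which $j$ and $j{+}1$ are joined exactly when $j\in L\cup U$; every entry of $M$ joining distinct components vanishes. It therefore suffices to show each diagonal block $M_c$ is quasi-diagonalizable, and blocks of size $1$ or $2$ are trivial. For a component on consecutive indices $\{s,\dots,s+\ell\}$ with $\ell\ge 2$, every internal position $j\in\{s,\dots,s+\ell-1\}$ lies in $L\cup U$ (otherwise the component would be disconnected), and a short case analysis — using that neither $L$ nor $U$ has two consecutive elements, and that if an internal $j$ were in $L\cap U$ then both $j{-}1$ and $j{+}1$ would have to be non-internal, impossible for $\ell\ge 2$ — shows that $L$ and $U$ split $\{s,\dots,s+\ell-1\}$ into its two parity classes. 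This forces the ``staggered'' shape of $M_c$: reordering the basis by parity of the index turns $M_c$ into a block upper triangular matrix $\left(\begin{smallmatrix}D_1 & X\\ 0 & D_2\end{smallmatrix}\right)$ with both diagonal blocks \emph{diagonal}; in particular the spectrum of $M_c$ consists of real diagonal entries.

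It remains to bound the Jordan structure. Assuming a Jordan chain $v_1,v_2,v_3$ of length three for an eigenvalue $\lambda$ and writing $v_i=(u_i,w_i)$ according to the block form, the relations $(M_c-\lambda I)v_1=0$, $(M_c-\lambda I)v_2=v_1$, $(M_c-\lambda I)v_3=v_2$ give on the second coordinate $(D_2-\lambda I)^2 w_2=0$, hence $(D_2-\lambda I)w_2=0$ (a diagonal matrix and its square have the same kernel), hence $w_1=0$, and then $w_2=0$ because the kernel and the image of the diagonal matrix $D_2-\lambda I$ intersect only in $0$; the first coordinate then yields $u_1=0$ in the same way, so $v_1=0$, a contradiction. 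Thus every Jordan block of $M_c$ has size at most $2$, and since its eigenvalues are real, $M_c$ is similar to a direct sum of $1\times 1$ and $2\times 2$ real matrices. Hence $M=\bigoplus_c M_c$, and therefore $A$, is quasi-diagonalizable, contradicting the hypothesis on $A$.

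The main obstacle is the treatment of components of size $\ge 3$: one must see that the two independent triangularity constraints are rigid enough to force the alternating parity pattern, and then extract from that pattern both the reality of the spectrum and the absence of length-three Jordan chains. A minor point worth making explicit at the outset is the meaning of ``quasi-diagonalizable'': it has to be reducibility to quasi-diagonal form by a general similarity, not an orthogonal one — for instance $\left(\begin{smallmatrix}1&0&0\\1&2&1\\0&0&3\end{smallmatrix}\right)$ is simultaneously quasi-upper- and quasi-lower-triangular and is plainly diagonalizable, yet it is not orthogonally quasi-diagonalizable.
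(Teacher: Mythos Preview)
Your argument is correct, but it is far more elaborate than the paper's. The paper dispatches the lemma in one line: it observes that $(Q^\top AQ)^\top=Q^\top A^\top Q$, so if $Q^\top AQ$ is quasi-upper-triangular then $Q^\top A^\top Q$ is quasi-\emph{lower}-triangular, and concludes that both being quasi-upper-triangular would force $Q^\top AQ$ to be quasi-diagonal, contradicting non-quasi-diagonalizability of $A$.

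What you do differently is refuse to conflate ``quasi-upper- and quasi-lower-triangular'' with ``quasi-diagonal'': you allow the two quasi-triangular structures to come from \emph{different} $1\times 1$/$2\times 2$ block partitions, reduce to a tridiagonal matrix whose sub- and super-diagonal supports each avoid consecutive indices, split into connected components, and for components of length $\ge 3$ extract the alternating parity pattern, real spectrum, and a Jordan-chain bound. This buys you an honest proof under the reading of ``quasi-diagonalizable'' as reducibility by a \emph{general} similarity. Your closing example $\left(\begin{smallmatrix}1&0&0\\1&2&1\\0&0&3\end{smallmatrix}\right)$ is a sharp observation: it shows that a matrix can be quasi-upper-triangular while its transpose is also quasi-upper-triangular (for a different partition) without being orthogonally quasi-diagonalizable --- so under the orthogonal reading the lemma as stated would actually be false, and under the general-similarity reading the paper's one-line proof skips exactly the work you supply. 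In short, the paper's route is the transpose identity plus an implicit ``same partition'' assumption; your route earns the conclusion without that assumption.
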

\begin{proof}
Since $A$ is a non-quasi-diagonalizable real square matrix, it is enough to notice that $(Q^\top AQ)^\top = Q^\top A^\top Q$. Therefore if $Q^\top A Q$ is quasi-upper-triangular (and not quasi-diagonal) then $Q^\top A^\top Q$ must be quasi-lower-triangular. The associated graph is presented in Figure \ref{2loops}. 
\end{proof}
\begin{figure}[H]
    \centering
\begin{tikzpicture}[->,shorten >=1pt,auto,node distance=3cm,
        thick,main 
        node/.style={circle,draw,minimum size=0.7cm,inner sep=0pt]}]

    \node[main node] (1) {${\cal V}$};
    \path
    (1) edge [loop right, distance=1cm] node {$A^\top$} (1)
         edge [loop left, distance=1cm] node {$A$} (1);
\end{tikzpicture}
    \caption{A representation of a graph with only one vertex and two loops. In general, by orthogonal changes of the basis of the vector space ${\cal V}$, we can not get both of the matrices $A$ and $A^\top$ being quasi-upper-triangular.} 
    \label{2loops}
\end{figure} 
{ 
\begin{theorem}\label{l2c}
Let $G$ be a graph that has a connected component with two cycles. Then there is a  collection  of real matrices associated with $G$ such that a simultaneous quasi-upper-triangularization (as the one described in Theorem \ref{real}), by admissible transformations (defined by $G$), of this collection of matrices is not possible.
\end{theorem}
\begin{proof}
Consider any connected graph with two cycles. We show that  the following collection of matrices, associated with such graph, cannot be reduced to a quasi-upper-triangular form: a non-quasi-diagonalizable real matrix $A$ is associated with one edge in one of the cycles, the matrix $A^T$ is associated with an edge in the second cycle (any edge that does not belong to the first  cycle can be chosen), and the identity matrices are associated with all the other edges, see, e.g., the part of the graph in Figure \ref{any2c}. 
\begin{figure}[H]
    \centering
\tiny{
\begin{tikzpicture}[->,shorten >=1pt,auto,node distance=2cm,
        thick,main 
        node/.style={circle,draw,minimum size=0.73cm,inner sep=0pt]}]

\node[main node] (1) {${\cal V}_1$};
\node[main node] (2) [below left of=1] {${\cal V}_2$};
\node[] (3) [above left of=2] {$\dots$};
\node[main node] (4) [above left of=1] {${\cal V}_{n'}$};
\node[] (5) [right of=1] {$\dots$};
\node[main node] (6) [right of=5] {${\cal V}_{n''+1}$};
\node[main node] (7) [below right of=6] {${\cal V}_{n'''}$};
\node[main node] (8) [above right of=6] {${\cal V}_{n''+2}$};
\node[] (9) [below right of=8] {$\dots$};

\path[->]
      (1) edge [bend right] node [right] [label=above: $I$] {} (4)
           edge [bend left] node [right] {} (2)
           edge [] node [right] [label=above: $I$] {} (5)
      (2) edge [bend right,-] node [right] [label=below: $A$] {} (1)
          edge [bend left,-] node [right] {} (3)  
      (4) edge [bend right,-] node [left] [label=above: $I$] {} (3)
          edge [bend left,-] node [right] {} (1)
      (3) edge [bend right] node [left] [label=below: $I$] {} (2)
          edge [bend left] node [right] {} (4)
      (5) edge [] node [right] [label=above: $I$] {} (6) 
      (6) edge [bend right] node [left] [label=below: $I$] {} (7)
           edge [bend left] node [left] [label=above: $A^T$] {} (8)
      (7) edge [bend right] node [right] [label=below: $I$] {} (9)
      (8) edge [bend left] node [right] [label=above: $I$] {} (9);       
\end{tikzpicture}}
    \caption{A part of a connected graph with two cycles. $A$ and $I$ are square matrices of  the same size. $I$ is the identity matrix and $A$ is non-quasi-diagonalizable. One edge that belongs only to  the first cycle is associated with $A$ and one edge that belongs only to the second cycle is associated with $A^T$. All the other edges of the graph are associated with the identity matrices $I$.} 
    \label{any2c}
\end{figure}
Assume that such a collection can be reduced to a quasi-upper-triangular form, i.e., matrices $A_i, i=1,\ldots , n'''$ can be reduced to a quasi-upper-triangular $T_1$ and  upper-triangular $T_i, i=2,\ldots , n'''$.  
Then consider two paths in the graph that start at the same node and each of them includes a different cycle { (we pass through the nodes of each cycle only once)}. 
We use the graph in Figure \ref{any2c} for illustration but our arguments are general. By our assumption, the   matrices  $A, I, \ldots , I$ associated with  all the edges included in the first path (the first path starts at ${\cal V}_1$, then goes to ${\cal V}_2$, \dots, then  to ${\cal V}_{n'}$ and ends at ${\cal V}_1$) can be reduced as follows:
$
Q_{2}^\top AQ_{1}=T_1, \ Q_{2}^\top IQ_{3}=T_2,\ \dots \ , \ Q_{n'}^\top IQ_{n'-1}=T_{n'-1}, \ Q_{n'}^\top I Q_{1}=T_{n'}  
$ 
and the matrices associated with  all the edges in the second path (the second path starts at ${\cal V}_1$, then goes to ${\cal V}_{n''+1}$, to ${\cal V}_{n''+2}$, \dots, then  to ${\cal V}_{n'''}$, to ${\cal V}_{n''+1}$, and all the way back to ${\cal V}_1$) can be reduced as follows: 
$
Q_{n'+1}^\top I Q_{1}=T_{n'+1},\ \dots \ , \ Q_{n''+1}^\top IQ_{n''}=T_{n''}, \ Q_{n''+2}^\top A^TQ_{n''+1}=T_{n''+1}, \ Q_{n''+3}^\top I Q_{n''+2}=T_{n''+2},\ \dots \ , \ Q_{n'''-1}^\top IQ_{n'''}=T_{n'''-1}, \ Q_{n'''}^\top I Q_{n''+1}=T_{n'''}.  
$
Then by multiplying the matrices $T_i$ or their inverses in the order that follows the path  (the inverse is taken if the arrow is in the opposite direction to the direction that we are following) we obtain, for the first path:  
$$T_{n'}^{-1} T_{n'-1} \dots T_2^{-1} T_1   = Q_{1}^{\top} I Q_{n'}Q_{n'}^\top IQ_{n'-1} \dots Q_{3}^{\top} IQ_{2} Q_{2}^\top AQ_{1}  = Q_{1}^{\top}AQ_{1}$$
and,  for the second path: 
\begin{equation*}
\begin{aligned}
&T_{n'+1}^{-1} \dots T_{n''}^{-1}T_{n'''}^{-1} T_{n'''-1}^{-1} \dots T_{n''+2}T_{n''+1}T_{n''}  \dots  T_{n'+1}  =  Q_{1}^\top I Q_{n'+1} \cdot \\
&\ldots \cdot Q_{n''}^\top IQ_{n''+1} Q_{n''+1}^{\top} I Q_{n'''} Q_{n'''}^{\top} IQ_{n'''-1} \dots Q_{n''+3}^\top I Q_{n''+2}Q_{n''+2}^\top A^TQ_{n''+1}Q_{n''+1}^\top IQ_{n''}  \cdot \\ 
&\ldots \cdot Q_{n'+1}^\top I Q_{1} =  Q_{1}^\top A^T Q_{1}  \\
\end{aligned}
\end{equation*}
Since the product of $T_i$ from the first path is quasi-upper-triangular and equal to $Q_{1}^{\top}AQ_{1}$, and the product from the second path is upper-triangular and equal to $Q_{1}^\top A^T Q_{1}$, we have a contradiction by Lemma \ref{pair}.
Note that when the matrices in our collection are $2 \times 2$ and the matrix $T_1$ is $2 \times 2$ then $Q_{1}^{\top}AQ_{1}$ is a full $2 \times 2$ matrix but $Q_{1}^\top A^T Q_{1}$ is supposed to be upper-triangular (as  a  product of upper-triangular matrices) thus we still have a contradiction. 
\end{proof}
}
%

In the following theorem, { we present a result on a reduction of a collection of complex matrices to upper-triangular forms (see Remark \ref{rem1} and Figure \ref{shapes} for the explanation on what being ``upper-triangular'' means for rectangular matrices), which is analogous to the result for real matrices presented in Theorem \ref{real}.}
\begin{theorem}\label{compl}
Let $A_i, i = 1, \dots , n,$ be $r_{j(i)} \times r_{k(i)}$ complex matrices of linear mappings associated with a pseudoforest. { Then there} exist unitary matrices $U_1, \dots , U_m,$ of compatible sizes such that    
\begin{equation*} 
\begin{aligned}
U_{j(1)}^{H}A_1U_{k(1)}=T_1,\ \ \ \dots \ , \ U_{j(n)}^{H}A_nU_{k(n)}=T_n, 
\end{aligned}
\end{equation*}
where $j(\cdot),k(\cdot): \{1, \dots ,n\} \to \{1, \dots ,m\}$ are mappings defined by the pseudoforest, and $T_i, i = 1, \dots , n,$ are $r_{j(i)} \times r_{k(i)}$ upper-triangular matrices. 


\end{theorem}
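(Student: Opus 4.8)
The plan is to mirror the proof of Theorem \ref{real}, replacing every appeal to the real (periodic) Schur decomposition and every real $QR$ step by its complex counterpart. Because over $\complex$ the Schur, generalized Schur, and periodic Schur forms are all genuinely upper-triangular (no $2\times 2$ diagonal blocks arise), the resulting $T_i$ will all be upper-triangular rather than quasi-upper-triangular, which is exactly what the statement asks. As in Theorem \ref{real}, I would first reduce to the case of a single pseudotree, since distinct connected components involve disjoint families of matrices $A_i$ and of transformation matrices $U_\ell$.

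First, I would handle the cycle. Writing $n'$ for its length ($n'=1$ for a loop, $n'=2$ for a Kronecker or contragredient pair, as in Example \ref{ex1}), I would invoke the complex periodic Schur decomposition \cite{BoGV92, ByRh95, GrKK07b, SrDo93} --- which for $n'=1$ is the classical Schur decomposition $U^H A U = T$ and for $n'=2$ is the generalized Schur decomposition of a pencil --- to obtain unitary matrices at the cycle vertices with $U_{j(i)}^H A_i U_{k(i)} = T_i$ upper-triangular for $i = 1,\dots, n'$. This fixes the unitaries attached to the vertices of the cycle.

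Next, I would propagate outward along the trees hanging off the cycle, exactly as in the proof of Theorem \ref{real}. If $n > n'$ then some cycle vertex, say vertex $1$, has degree at least $3$; for each non-cycle edge leaving an already-processed vertex (starting at vertex $1$), if it is directed away from that vertex I would take the new unitary to be the factor $U$ from a $QR$ factorization of $A\,U_{\mathrm{old}}$, and if it is directed towards that vertex I would take it from an $RQ^H$ factorization of $U_{\mathrm{old}}^H A$; in either case $T := R$ is upper-triangular and the new unitary becomes fixed. Iterating vertex by vertex down to the leaves, and then repeating at the remaining vertices of the cycle, triangularizes all the $A_i$.

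The only real content here is the existence of the complex periodic Schur decomposition with strictly upper-triangular factors, which I would cite rather than reprove; everything else is orientation bookkeeping (choosing $QR$ versus $RQ^H$ according to the direction of each edge) together with the structural fact --- already used for Theorem \ref{real} --- that removing the cycle from a pseudotree leaves a forest, so that the outward propagation assigns each newly introduced unitary exactly once and terminates. I do not anticipate a genuine obstacle beyond keeping the index conventions straight.
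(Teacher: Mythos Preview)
Your proposal is correct and follows exactly the paper's own approach: the paper's proof of Theorem~\ref{compl} is the single line ``The proof is analogous to the proof of Theorem~\ref{real},'' and you have spelled out precisely that analogy (complex periodic Schur form on the cycle, then outward $QR$/$RQ^{H}$ steps along the attached trees). If anything, you have given more detail than the paper does.
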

\begin{proof}
The proof is analogous to the proof of Theorem \ref{real}.
\end{proof}

Note also that the counterexample in Lemma \ref{pair} works also for complex matrices under unitary transformations. In this case, we have to pick a non-diagonalizable square matrix $A$.  

{ 
\begin{theorem}\label{any2cc}
Let $G$ be a graph that has a connected component with two cycles. Then there is a  collection of complex matrices associated with $G$ such that a simultaneous upper-triangularization (as the one described in Theorem \ref{compl}), by admissible transformations (defined by $G$), of these matrices is not possible.
\end{theorem}
\begin{proof}
The proof is analogous to the proof of Theorem \ref{l2c}.
\end{proof}
}

\begin{remark}[Periodic Schur form or reduction for the cycles] \label{cancycle}
In the references \cite{BoGV92,  ByRh95,  GrKK07b,  SrDo93}, the authors mainly restrict themselves to the cases of square matrices. { Therefore, we give this short remark on the reduction to the Schur forms} of the rectangular matrices associated with the cycles.  Our explanation is based on a ``stronger'' result of \cite{Serg04}. In \cite{Serg04}, a Kronecker-like canonical form under the nonsingular transformations of the matrices associated with cycles is derived. By { applying} the $RQ$ decomposition of each of the transformation matrices and then multiplying the canonical matrices with $R$, we obtain the (quasi-)upper-triangular matrices. The remaining parts of the transformation matrices,  i.e., matrices $Q$, are the new unitary (orthogonal) transformation matrices. { For the details on this method see \cite[Theorem 2]{DeDD20a} where such a result is proved for symmetric matrix pencils.} We also refer to Section 5 of \cite{KaKK11} for the numerically stable procedure of reducing the case with the rectangular matrices to the case of square matrices for the periodic Schur form. 
\end{remark}

\section{Applications and future work} 
\label{appfw}
In this section we highlight some future work and possible applications of the results of this paper. In particular, our results show immediately that the problem of triangularization of complex matrix polynomials cannot be solved using unitary transformations; and give a possible simple form for cross-correlation matrices in statistical signal processing. They can also be used for investigating uniqueness of the solutions and solving some systems of Sylvester matrix equations. 

\noindent{\bf Triangularization of matrix polynomials}.
Let 
\begin{equation} 
\label{matpol}
P(\lambda) = \lambda^{d}A_{d} + \dots +  \lambda A_1 + A_0, 
\quad \ A_i \in \mathbb C^{m \times n},  \text{  and } i=0, \dots, d 
\end{equation}
be a matrix polynomial. We would like to know whether it is possible to triangularize $P(\lambda)$ by multiplying it from the left and right with unitary matrices $U_2$ and $U_1$, i.e., if we can find $U_2$ and $U_1$ such that $U_2 P(\lambda) U_1 =  \lambda^{d}U_2A_{d}U_1 + \dots +  \lambda U_2A_1U_1 + U_2A_0U_1$ is upper-triangular. 

Note that we already know that the answer is positive for a matrix polynomial of degree one, i.e., for a matrix pencil $\lambda A_1 + A_0$. Multiplying $\lambda A_1 + A_0$ from the left and right with unitary matrices $U_2$ and $U_1$, we obtain 
$U_2(\lambda A_1 + A_0)U_1 = \lambda U_2A_1U_1 + U_2A_0U_1 = \lambda T_1 + T_0,$
where both $T_1$ and $T_0$ are upper-triangular. 

Nevertheless, by Theorem \ref{any2cc}, the answer to our question is negative for $d \geq 2$. Namely, we have the same transformation applied to the rows of all the matrix coefficients of $P(\lambda)$ and the same transformation applied to the columns of all the matrix coefficients of $P(\lambda)$. Therefore the associated quiver is not a pseudotree, see Figure \ref{polpic}. 

\begin{figure}[H]
    \centering
\begin{tikzpicture}[->,shorten >=1pt,auto,node distance=4cm,
        thick,main 
        node/.style={circle,draw,minimum size=0.7cm,inner sep=0pt]}]

    \node[main node] (1) {${\cal V}_1$};
    \node[main node] (2) [right of=1] 
    {
    ${\cal V}_2$ 
    };

    \path[->]
    (1) edge [bend left] node 
{$A_0$} 
(2)
    (1) edge node {$A_1$} 
(2)
    (1) edge [bend right] node [label=above: $\boldsymbol \vdots$] [below]
{$A_d$} 
 (2);
\end{tikzpicture}
    \caption{A representation of a graph with two vertices and $d$ edges, $d \ge 2$. In general, by unitary changes of the bases of the complex vector spaces ${\cal V}_1$ and ${\cal V}_2$, we can not get all the matrices $A_i, i=0, \dots , d$ being upper-triangular.} 
    \label{polpic}
\end{figure} 
\noindent In \cite{ADHM21, TaTZ13} it is discussed how to triangularize polynomials using unimodular transformations. 

\noindent{\bf Reduction of correlation matrices in joint independent subspace analysis}.
Consider a collection of cross-correlation matrices $\{ S^{[k,l]}, k,l = 1, \dots, n \}$ coming form the model for statistical signal processing, called Joint Independent Subspace Analysis (JISA), see \cite{LaJu18} for more details on the model and construction of such matrices. 
JISA model allows us to transform these cross-correlation matrices as follows: $S^{[k,l]} \mapsto Z^{[k]}S^{[k,l]}Z^{[l]H}$, where the matrices $Z^{[i]}, i = 1, \dots, n$ are coming from coupled change of bases. Using our graph interpretation we note that such a base-change can be associated with a complete (fully-connected) graph on $n$ vertices. { These change of bases can be used,  e.g., for an investigation whether the matrices $\{ S^{[k,l]} \}$ are reducible or irreducible, for more details see \cite{LaJu18} and for the investigation of general (not only cross-correlation) double-indexed matrices see \cite{LaJS19}.}


%
\noindent{\bf Sylvester matrix equations}. 
Sylvester matrix equations and their systems come in all shapes and sizes, see, e.g., \cite{DIPR19,DFKS17,DmKa15}.  In \cite{DIPR19}, a criterion for the uniqueness { of the solution} of a square system of Sylvester matrix equations is presented. The result of this paper may be helpful for developing such a criterion in the case where the involved matrices are rectangular. Moreover, Schur form of a single matrix is a key ingredient of Bartels-Stewart algorithm for solving small-to-medium size Sylvester matrix equations $AX-XB =C$ and Schur form of the matrices associated with pseudoforests may be used for the Bartels-Stewart-type algorithm for solving systems of Sylvester matrix equations, especially if the involved matrices are rectangular. 
\section*{Acknowledgements}
The author is grateful to Vladimir Sergeichuk and Daniel Kressner for the useful discussions on this paper. { The author also thanks the anonymous referees for the helpful remarks and suggestions. 

The work of the author has been supported by the Swedish Research Council (VR) under grant 2021-05393.}

{\small 
\bibliographystyle{plain}
\bibliography{references}
}
\end{document}